\newtheorem{theorem}{Theorem}[section]
\newtheorem{proposition}[theorem]{Proposition}
\newtheorem{corollary}[theorem]{Corollary}
\newtheorem{lemma}[theorem]{Lemma}
\newtheorem{conjecture}[theorem]{Conjecture}
\newcommand{\bigfrac}[2]{\frac{\textstyle #1}{\textstyle #2}}
\newcommand{\Z}{{\mathbb Z}}
\newcommand{\F}{{\mathbb F}}
\numberwithin{equation}{section}
\title{Generators for Group Homology and a Vanishing Conjecture}
\author{Joshua Roberts}
\date{}
\begin{document}

\maketitle



\begin{abstract} Letting $G=F/R$ be a finitely-presented group, Hopf's formula expresses the second integral homology of $G$ in terms of $F$ and $R$. Expanding on previous work, we explain how to find generators of $H_2(G;\mathbb{F}_p)$. The context of the problem, which is related to a conjecture of Quillen, is presented, as well as example calculations.
\end{abstract}


\section{Introduction}

Exploiting a classical theorem due to Hopf, we presented a series of algorithms in \cite{roberts:10} that give upper 
bounds on
group homology in homological dimensions one and two, provided coefficients are taken in a finite field.
In particular, examples confirmed the results in \cite{anton:08}, as well a new result, concerning the rank two special 
linear group over rings of number theoretic interest. This paper can be viewed as both a sequel and 
expansion of the results in \cite{roberts:10}.

The initial motivation for constructing the algorithms was to gain insight into special cases of 
a conjecture originally given by Quillen in 1971, which we briefly discuss in Section \ref{quillen-subsec}. 
However, since the algorithms in \cite{roberts:10} depend only upon Hopf's formula for $H_2$, the usefulness of these 
algorithms extends to groups beyond the scope of Quillen's Conjecture.
Moreover, the algorithms are distinct from existing methods of calculating low dimensional group
homology in that they give an upper bound on the homology of any finitely-presented
group, though the upper bound is, at times, very large.

The main contribution of this paper is in Section \ref{gen-hom-groups} wherein we present a technique that expounds on the algorithms in \cite{roberts:10}
to find explicit generators of these homology groups. The technique relies heavily upon the above 
mentioned Hopf's formula for 
the second homology group of a finitely-presented group; the calculations are carried out with the computational
algebra program GAP \cite{gap4}.

As a byproduct of the calculations related to Quillen's Conjecture we are involved in a long
term project of preparing a database for low dimensional group homology of linear groups over number fields and their rings of integers. This work will be extended to other classes of finitely-presented groups of interest to computational group theory and algebraic topology. The first set of these calculations is found in Section \ref{calc}.

We note that when it is clear from the context, we occasionally omit explicitly writing the ground ring of 
linear groups as well as homology coefficients.

\section{A Vanishing Conjecture}\label{quillen-subsec}

One motivational problem for low dimensional group homology, which is related to algebraic K-theory, is the
study of homology for groups $GL_j(R)$, where $GL_j$ is a finite rank $j$ general linear group
 and $R$ is the ring of
integers in a number field. An approach to this problem is to consider the
diagonal matrices inside $GL_j$. Let $D_j$ denote the subgroup
formed by these matrices. Then the canonical inclusions $D_j\subset
GL_j$ for $j=0,1,...$ induce homomorphisms on group homology with
$k$-coefficients 
\begin{equation}
\rho: H_i(D_j(R);k) \to H_i(GL_j(R);k).
\end{equation}
 In \cite{quillen:71} Quillen conjectured:
\begin{conjecture} \label{quillen-conjecture} The homomorphism $\rho$, as 
given above, is an epimorphism for $R=\mathbb Z[\zeta_p, 1/p]$, $p$ a regular odd prime, $\zeta_p$ a primitive 
$p$th root of unity, $k=\mathbb F_p$ and any values of $i$ and $j$.
\end{conjecture}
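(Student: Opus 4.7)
The plan is to attack Conjecture \ref{quillen-conjecture} through a combination of computational verification in low-rank base cases and structural reductions that chip away at the general statement. Since the algorithms of \cite{roberts:10}, refined in Section \ref{gen-hom-groups}, produce explicit generating sets for $H_2(G; \mathbb{F}_p)$ for any finitely presented $G$, the immediate task is to apply them to $G = GL_j(\mathbb{Z}[\zeta_p, 1/p])$ for small $j$ and a regular odd prime $p$, producing finite lists of generators modulo the relations coming from Hopf's formula. Comparing these against the images of generators of $H_2(D_j; \mathbb{F}_p)$ --- which, since $D_j$ is abelian, can be written down explicitly in terms of the units group $R^{\times}$ and the K\"unneth formula --- reduces the $(i,j) = (2, \text{small})$ instance of surjectivity to a finite linear algebra problem over $\mathbb{F}_p$ that GAP can carry out.

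With base cases in hand, the structural part of the plan has two prongs. First, for fixed degree $i = 2$ and varying $j$, I would induct on $j$ using the Hochschild-Serre spectral sequence associated to the parabolic subgroup $P_{j-1,1} \subset GL_j$, reducing surjectivity at rank $j$ to surjectivity at rank $j-1$ together with control of the classes contributed by the unipotent radical. Second, for fixed $j$ and varying $i$, I would invoke Quillen's stratification theorem: under the regularity hypothesis on $p$, every elementary abelian $p$-subgroup of $GL_j(R)$ should be conjugate into $D_j(R)$ (since $R$ contains a primitive $p$th root of unity), and one attempts to dualize this cohomological detection statement into homological surjectivity of $\rho$.

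The main obstacle, and the reason the conjecture has resisted a general proof, lies in that last dualization. Quillen's theorem produces an $F$-isomorphism $H^*(GL_j; \mathbb{F}_p) \to \varprojlim_E H^*(E; \mathbb{F}_p)$ over elementary abelian $p$-subgroups $E$, which is weaker than a genuine isomorphism and does not dualize cleanly to a surjection in homology; the discrepancy between the inverse limit and the actual cohomology contributes classes that must be tracked carefully, and it is precisely here that the regularity of $p$ has to be exploited in a nontrivial arithmetic way via the vanishing of $p$-torsion in the class group of $\mathbb{Z}[\zeta_p]$. Given the delicacy of this step, a realistic short-term goal is to extend the computational verification of Section \ref{gen-hom-groups} to larger ranges of $(i, j, p)$, using the explicit generators either to confirm surjectivity of $\rho$ in new cases or, conversely, to exhibit concrete homology classes outside the image of $\rho$ whenever the conjecture fails.
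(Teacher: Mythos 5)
The statement you are trying to prove is a \emph{conjecture}, and the paper offers no proof of it; more importantly, the paper itself records that the conjecture in the stated generality is \emph{false}. Anton disproved it for $R=\mathbb{Z}[1/3,\zeta_3]$ (so $p=3$, a regular odd prime) for all $j\ge 27$, and Henn--Lannes disproved the analogous statement for $R=\mathbb{Z}[1/2]$ at $j=14$; Henn further showed that failure at some $j_0$ forces failure for all $j\ge j_0$. So any strategy whose endpoint is surjectivity of $\rho$ ``for any values of $i$ and $j$'' cannot succeed, and your two structural prongs break down at exactly the points you would need them. The induction on $j$ via parabolic subgroups cannot propagate surjectivity upward --- the known counterexamples show the conjecture can hold at small $j$ and fail at larger $j$, so rank $j-1$ surjectivity plus control of the unipotent radical cannot imply rank $j$ surjectivity by any argument that works uniformly. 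The Quillen stratification prong fails for the reason you yourself flag: the $F$-isomorphism onto $\varprojlim_E H^*(E;\mathbb{F}_p)$ does not dualize to a homological surjection, and Dwyer's counterexample is precisely a demonstration that detection on elementary abelian $p$-subgroups does not rescue the conjecture. There is no way to ``exploit regularity'' to close that gap, because the gap is real.

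What the paper actually does is much more modest and is close in spirit only to your first paragraph. It does not attack the conjecture for general $i$ and $j$; it restricts to Anton's corrected reformulation, $H_2(GL_2(R);\mathbb{F}_p)\cong H_2(D_1(R);\mathbb{F}_p)$, and uses the Hochschild--Serre spectral sequence of $1\to SL_2(R)\to GL_2(R)\to GL_1(R)\to 1$ (with $SL_2(R)$ perfect since $R$ is Euclidean) to reduce that case to a statement about $H_2(SL_2(\mathbb{Z}[1/p,\zeta_p]);\mathbb{F}_p)$ and the image of a transgression. The computational content --- Hopf's formula, the {\sc FindBasis} algorithm, explicit generators over $\mathbb{F}_7$ --- is then aimed at bounding or computing that single group, not at verifying surjectivity of $\rho$ across ranges of $(i,j,p)$. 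Your proposal's computational base cases are compatible with this, but you should reframe the goal: the only honest targets are specific small cases (essentially $i=j=2$) and the reformulated Conjecture 2.2, not the original statement for all $i$ and $j$.
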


Conjecture \ref{quillen-conjecture} has been proved in a few cases and disproved in infinitely many other cases. For
$R=\Z[1/2]$ it was proved by Mitchel in \cite{mitchell:92} for $j=2$ and by Henn in \cite{henn:99} for $j=3$.
Anton gave a proof for $R=\Z[1/3,\zeta_3]$ and $j=2$ in \cite{anton:99}.

Dwyer gave a disproof for the conjecture for $R=\Z[1/2]$ and $j=32$ in \cite{dwyer:98} which
Henn and Lannes improved to $j=14$ in \cite{henn:95}; this is an improvement in light
of Henn's result in \cite{henn:96} that states that if
Conjecture \ref{quillen-conjecture} is false for $j_0$ then
it is false for all $j \ge j_0$. Anton disproved the conjecture
for $R=\Z[1/3,\zeta_3]$ and $j \ge 27$ also in \cite{anton:99}.
The interested reader should consult \cite{knudson:00} for more details.

This conjecture was reformulated and, in a sense, corrected by Anton:
\begin{conjecture}\textup{\cite{anton:03}} \label{anton-conjecture} Given $p, k$ and $R$ as above,
\begin{equation}
H_2(GL_2(R);k) \cong H_2(D_1(R);k).
\end{equation}
\end{conjecture}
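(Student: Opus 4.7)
The plan is to attack the isomorphism using Hopf's formula on both sides. For $D_1(R)\cong R^\times\times R^\times$, Dirichlet's unit theorem pins down $R^\times$ for $R=\Z[\zeta_p,1/p]$, and K\"unneth over the field $\F_p$ gives
\[
H_2(D_1(R);\F_p)\cong H_2(R^\times;\F_p)\oplus\bigl(H_1(R^\times;\F_p)\otimes H_1(R^\times;\F_p)\bigr)\oplus H_2(R^\times;\F_p),
\]
an explicit $\F_p$-vector space of known dimension with a canonical basis of wedges and tensor products of diagonal units. This fixes the target of $\rho$, and injectivity of $\rho$ can be handled separately by exhibiting a homological retraction that detects each basis class, for instance via determinants and transfers along the block-diagonal embedding.

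On the $GL_2(R)$ side I would run the machinery of Section \ref{gen-hom-groups}. Starting from a finite presentation $GL_2(R)=F/R_0$, Hopf's formula $H_2(GL_2(R);\F_p)\cong((R_0\cap[F,F])/[F,R_0])\otimes\F_p$ combined with the algorithm produces an explicit finite list of cycle representatives spanning $H_2(GL_2(R);\F_p)$. The heart of the argument is then to show, generator by generator, that each such class is cohomologous to a commutator of diagonal units and so lies in the image of $\rho$. Because $GL_2(R)$ is generated by diagonal, elementary, and permutation matrices, the representatives produced by the algorithm are words in Steinberg-type commutators, and the task reduces to computing the tame symbol map $K_2(R)/p\to\bigoplus\F_p$ and verifying that every tame-trivial class is diagonally representable.

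The main obstacle will be the class-by-class identification of algorithmically produced generators with Steinberg symbols in a way that is uniform in the prime $p$. For $R=\Z[\zeta_p,1/p]$ with $p$ regular, $K_2(R)/p$ is controlled by cyclotomic data through Bass--Tate-style computations, but the algorithm of Section \ref{gen-hom-groups} returns raw commutator words whose translation into such symbols requires nontrivial matrix manipulation. My plan is first to execute the algorithm for small regular primes using the calculations of Section \ref{calc}, verifying the isomorphism case by case, and then to abstract the common pattern into a uniform argument --- expecting the GAP computations to suggest the correct grouping of generators into Steinberg symbols rather than to deliver a closed-form proof by themselves.
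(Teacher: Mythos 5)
The statement you are trying to prove is presented in the paper as a \emph{conjecture} (due to Anton), not a theorem: the paper offers no proof of it, and its own computations stop short of settling even the single case $p=7$. What the paper does instead is reduce the conjecture: applying the Hochschild--Serre spectral sequence to $1\to SL_2(R)\to GL_2(R)\to GL_1(R)\to 1$ (using that $SL_2(R)$ is perfect, so the $q=1$ row vanishes) it obtains
\[
H_2(GL_2(R);k)\cong\bigl(H_2(SL_2(R);k)_{GL_1(R)}/\mathrm{Im}(\tau)\bigr)\oplus H_2(GL_1(R);k),
\]
so that the conjecture is equivalent to surjectivity of the transgression $\tau$, and is in particular implied by the vanishing of $H_2(SL_2(\Z[1/p,\zeta_p]);\F_p)$. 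The machinery of Section~\ref{gen-hom-groups} then produces, for $p=7$, six explicit candidate generators of that group, but the paper explicitly concedes that it cannot decide whether they are trivial. So there is no proof in the paper for your proposal to match, and your proposal does not close the gap either.

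Beyond that structural issue, there are concrete errors and non-arguments in your plan. First, $D_1(R)$ is the diagonal subgroup of $GL_1(R)$, i.e.\ $D_1(R)\cong R^\times$; your K\"unneth decomposition $H_2(R^\times)\oplus(H_1\otimes H_1)\oplus H_2(R^\times)$ computes $H_2(D_2(R);\F_p)$, not $H_2(D_1(R);\F_p)$ --- the whole point of Anton's reformulation is that one can drop from $D_2$ to $D_1$. Second, ``exhibiting a homological retraction that detects each basis class'' and showing ``generator by generator'' that each class is diagonally representable are precisely the open problems, not steps you have supplied; the paper's own GAP output yields raw relator words whose triviality or nontriviality in $H_2$ it cannot determine. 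Third, the concluding plan --- run GAP for small regular primes and ``abstract the common pattern into a uniform argument'' --- is a research program, not a proof. Your instinct to use Hopf's formula and the presentation-based algorithm matches the paper's treatment of $SL_2$, but that machinery only yields an upper bound on $\dim H_2$ together with a list of candidate generators; the surjectivity of $\tau$ (equivalently, the vanishing or diagonal representability of those classes) remains untouched.
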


Anton's conjecture led to a proof of Conjecture \ref{quillen-conjecture} for $\Z[1/5, \zeta_5]$ and $i=j=2$.
For a survey on the current status of conjectures \ref{quillen-conjecture} and \ref{anton-conjecture} we cite \cite{anton:08}.

\subsection{Reduction via a Spectral Sequence}
Given a group extension $$ 1 \to N \to G \to Q \to 1$$ there is the Hochschild-Serre Spectral Sequence 
\cite[p. 341]{mccleary:01} with
\begin{equation}\label{hsss}
E^2_{p,q} \cong H_p(Q; H_q(N;k)) \Longrightarrow H_{p+q}(G;k),
\end{equation}
where we take coefficients in a field $k$ regarded as a trivial $G$-module. We use this spectral sequence to
reduce a special case of Quillen's conjecture to an exercise in linear algebra.

\begin{lemma}
Fix $R=SL_2(\Z[\zeta_p, 1/p])$ and field of coefficients $k=\F_p$, 
\begin{equation}
H_2(GL_2(R);k) \cong (H_2(SL_2(R);k)_{GL_1(R)}/Im(\tau) \oplus H_2(GL_1(R);k),
\end{equation}
where, for a group $G$ and a $G$-module $M$, $M_G$ is the group of co-invariants and $\tau$ is the 
transgression map $E^3_{3,0} \to E^3_{0,2}$.
\end{lemma}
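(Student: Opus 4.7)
The plan is to apply the Hochschild--Serre spectral sequence~(\ref{hsss}) to the determinant extension
\[
1 \to SL_2(R) \to GL_2(R) \xrightarrow{\det} GL_1(R) \to 1
\]
and track the three $E^2$-entries on the line $p+q = 2$, namely $E^2_{0,2} = H_2(SL_2(R); k)_{GL_1(R)}$, $E^2_{1,1} = H_1(GL_1(R); H_1(SL_2(R); k))$, and $E^2_{2,0} = H_2(GL_1(R); k)$, where $GL_1(R)$ acts on $H_{\ast}(SL_2(R); k)$ by conjugation through $GL_2(R)$.

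Next, I would exploit the fact that this extension splits via the group homomorphism $s: GL_1(R) \to GL_2(R)$, $a \mapsto \mathrm{diag}(a,1)$. The induced map $s_{\ast}$ is a section of the edge homomorphism $\det_{\ast}: H_{\ast}(GL_2(R); k) \to H_{\ast}(GL_1(R); k)$, which forces all differentials leaving the bottom row to vanish (so $E^\infty_{p,0} = E^2_{p,0}$) and splits the two-step filtration on $H_2(GL_2(R); k)$, exhibiting $E^\infty_{2,0} = H_2(GL_1(R); k)$ as a direct summand.

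To identify the other summand $E^\infty_{0,2}$, I would invoke the vanishing $H_1(SL_2(R); \F_p) = 0$ for $R = \Z[\zeta_p, 1/p]$, a consequence of standard results on the abelianization of $SL_2$ over rings with sufficiently many units. This kills the entire $q=1$ row of $E^2$; in particular $E^2_{2,1} = 0$ and $E^2_{1,1} = 0$, so $E^3_{0,2} = E^2_{0,2}$ and the only remaining differential that can affect this term is the transgression $\tau: E^3_{3,0} \to E^3_{0,2}$. Thus $E^\infty_{0,2} = H_2(SL_2(R); k)_{GL_1(R)} / \Ima(\tau)$, and reassembling via the split filtration yields the claimed isomorphism.

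The main obstacle is the verification of $H_1(SL_2(R); \F_p) = 0$ under the standing hypotheses on $R$; once that vanishing is in hand, the rest is routine bookkeeping for a split Hochschild--Serre spectral sequence with essentially only three nonzero entries on the diagonal of total degree two.
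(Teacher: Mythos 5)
Your proposal follows essentially the same route as the paper's proof: the Hochschild--Serre spectral sequence for the determinant extension, killing the $q=1$ row, and reading off the three $E^\infty$-terms in total degree two. The one step you explicitly leave open --- the vanishing of $H_1(SL_2(R);\F_p)$ --- is exactly the step the paper supplies by citation: $\Z[\zeta_p,1/p]$ is a Euclidean ring by a result of Cohn, and Lemma 7.2 of Anton then gives that $SL_2$ of this ring is perfect, so $H_1(SL_2(R);k)=0$ and the whole row $E^2_{*,1}$ vanishes; your ``rings with sufficiently many units'' heuristic is the right idea but needs this reference (mere Euclideanness is not enough in general, as $SL_2(\Z)$ shows). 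The only genuine divergence is how the two-step filtration on $H_2(GL_2(R);k)$ is split: you invoke the group-theoretic section $a\mapsto\mathrm{diag}(a,1)$ of the determinant, whereas the paper simply notes that with field coefficients all extension problems of $k$-vector spaces are trivial, which suffices. Your stronger argument does, however, buy an observation the paper does not make: split surjectivity of the edge map $H_*(GL_2(R);k)\to H_*(GL_1(R);k)$ forces every differential leaving the bottom row to vanish, so the transgression $\tau:E^3_{3,0}\to E^3_{0,2}$ is zero and the quotient by $\Ima(\tau)$ in the statement is vacuous. That is consistent with the lemma as written, but it is worth recording, since the paper's subsequent discussion hinges on whether $\tau$ is surjective.
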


\begin{proof}
We first note that $R$ is a Euclidean ring \cite{cohn:66}, which, by Lemma 7.2 \cite{anton:03} 
implies that $SL_2(R)$ is a perfect group. Thus, applying the spectral sequence \ref{hsss} to the extension
\begin{equation}
1 \to SL_2(R) \to GL_2(R) \to GL_1(R) \to 1,
\end{equation}
 we see that the entries $E^2_{p,1}$ are all $0$.
 Thus for $q<3$ the $E^3$ page is equal to the $E^2$ page.

 We also note that
\begin{equation}
GL_1(R) \cong D_1(R) \cong R^\times, \label{dimarg3}
\end{equation}
where $R^\times$ is the group of units of $R$.

\begin{figure}[h]
\centering
\includegraphics[scale=.85]{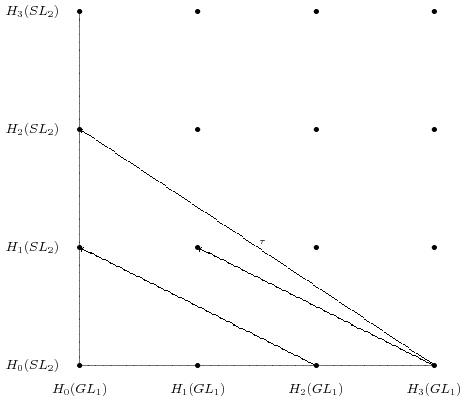}
\caption{$E^2$ page with $\tau:E^3_{3,0} \to E^3_{0,2}$ displayed \label{ss}}
\end{figure}

Figure \ref{ss} displays the $E^2$ page of this spectral sequence, and we have included the transgression $\tau: 
E^3_{3,0} \to E^3_{0,2}$ for reference.
Note that since
$E^2_{p,q} \cong E^3_{p,q}$ for all $p$ and for all $q<3$ then $E^2_{p,1} \cong E^\infty_{p,1}$. Moreover, 
$E^4_{p,q} \cong E^\infty_{p,q}$ for $p,q+1<4$ and
$E^4_{0,2} \cong H_2(SL_2(R))_{GL_1(R)} / Im (\tau)$. Since we have chosen field coefficients, 
any extension problems are trivial. Thus we have the following decomposition.
\begin{align}
H_2(GL_2(R)) &\cong E^4_{2,0} \oplus E^4_{1,1} \oplus E^4_{0,2}\\
& \cong  H_2(SL_2(R))_{GL_1(R)} / Im (\tau) \oplus H_2(GL_1(R)). \label{ss-equation}
\end{align}
\end{proof}

This immediately implies the following corollary.

\begin{corollary}
As vector spaces over $k$, 
\begin{equation}\label{dimarg}
\text{dim }H_2(GL_2(R);k) \ge \text{dim }H_2(GL_1(R);k)
\end{equation}
\end{corollary}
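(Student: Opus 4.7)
The plan is to read the inequality directly off the vector space decomposition established in the lemma just proved. Since we are working with $k$-coefficients where $k=\mathbb{F}_p$ is a field, every $k$-vector space has a well-defined dimension, and dimension is additive on direct sums of finite-dimensional summands (and still gives a valid inequality when values are infinite cardinals).

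First I would invoke the lemma to rewrite
\begin{equation*}
H_2(GL_2(R);k) \;\cong\; H_2(SL_2(R);k)_{GL_1(R)}/\mathrm{Im}(\tau) \;\oplus\; H_2(GL_1(R);k)
\end{equation*}
as an isomorphism of $k$-vector spaces. Taking $k$-dimensions of both sides and using additivity of dimension for direct sums gives
\begin{equation*}
\dim H_2(GL_2(R);k) \;=\; \dim\bigl(H_2(SL_2(R);k)_{GL_1(R)}/\mathrm{Im}(\tau)\bigr) \;+\; \dim H_2(GL_1(R);k).
\end{equation*}

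Since the first summand on the right is a $k$-vector space, its dimension is a nonnegative (possibly infinite) cardinal, so dropping it preserves the inequality and yields
\begin{equation*}
\dim H_2(GL_2(R);k) \;\geq\; \dim H_2(GL_1(R);k),
\end{equation*}
which is precisely \eqref{dimarg}. There is essentially no obstacle here: the corollary is a one-line consequence of the lemma once one observes that field coefficients turn the abstract isomorphism into a genuine statement about vector space dimensions. The only minor bookkeeping point worth mentioning is that one should note (as the lemma already did) that the absence of extension problems is exactly what allows the $E^\infty$ decomposition to lift to an honest direct sum decomposition of $H_2(GL_2(R);k)$.
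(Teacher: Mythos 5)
Your proof is correct and matches the paper's intent exactly: the paper states the corollary as an immediate consequence of the lemma's direct sum decomposition, and your argument of taking dimensions and dropping the nonnegative summand is precisely that one-line deduction.
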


Recall from Section \ref{quillen-subsec} that the Quillen Conjecture implies that the map induced by inclusion
\begin{equation}
H_2(D_2(R))  \twoheadrightarrow H_2(GL_2(R)) \label{quillen}
\end{equation}
is surjective. Anton's reformulation of Quillen's conjecture in \cite{anton:08} and results in \cite{anton:03} imply that map \ref{quillen} factorizes thusly:
\begin{equation}
\xymatrix{
H_2(D_2) \ar@{->>}[rr] \ar[dr] &      & H_2(GL_2) \\
                         &   H_2(D_1) \ar@{->>}[ur] & }.
\end{equation}
Then $H_2(D_1(R)) \twoheadrightarrow H_2(GL_2(R))$ is surjective and
\begin{equation}
\text{dim }H_2(D_1) \ge \text{dim }H_2(GL_2) \label{dimarg2}.
\end{equation}
Then equations \ref{dimarg}, \ref{dimarg2}, and \ref{dimarg3} imply Conjecture \ref{anton-conjecture}
\begin{equation}
H_2(GL_2(R)) \cong H_2(GL_1(R)),
\end{equation}
which, by Equation \ref{ss-equation}, is equivalent to
\begin{equation}
H_2(SL_2(R))_{GL_1(R)}  \cong  Im (\tau),
\end{equation}
which is true if and only if $\tau$ is surjective. Moreover, for Conjecture \ref{anton-conjecture} to be true, it
is sufficient for the finitely-presented group $SL_2(\Z[1/p, \zeta_p])$ to have trivial second dimensional
$\F_p$-homology.

In this context, the purpose of \cite{roberts:10} was to give a series of algorithms that 
estimated the second homology group of any finitely-presented group.
More precisely, given a finitely-presented group $G$ and a
finite field $k$, the second homology group $H_2(G;k)$ with coefficients
in $k$ is a finite dimensional vector space over $k$. Our algorithms
give an upper bound for the dimension of $H_2(G;k)$ and, in
particular cases, the algorithms calculate precisely this dimension.

The algorithms confirmed results by Anton that Conjecture \ref{quillen-conjecture} holds for $R=SL_2(\Z[1/p, \zeta_p])$ 
and $k=\F_p$ for $p=3$ and $p=5$ (\cite{anton:03} and \cite{anton:08}).

\section{Generators of Homology Groups}\label{gen-hom-groups}

Let $1 \to K \stackrel{i}{\to} F \stackrel{q}{\to} G \to 1$ be an exact sequence of groups where $F$ is a finitely generated
free group and $K$ is finitely generated as an $F$-module with the $F$-action given by conjugation, $i$ and $q$ denote inclusion and
quotient homomorphism, respectively. That is, $G$ has finite presentation given  by the generators of $F$ modulo
then normal closure of $K$ in $F$.

\begin{theorem}[Hopf]Given $G,F,K$ as above, there is an exact sequence
$$ 1 \to [F,R] \to [F,F] \to H_2(G,\mathbb Z) \to 1.$$
\end{theorem}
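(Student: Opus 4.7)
The plan is to deduce Hopf's formula from the five-term exact sequence in low-degree integral homology associated to the extension $1 \to R \to F \to G \to 1$ (reading $R$ for $K$, and interpreting the middle term of the stated sequence as $R \cap [F,F]$). First, I would recall this five-term sequence, which arises from the bottom edge of the Lyndon--Hochschild--Serre spectral sequence $E^2_{p,q} = H_p(G; H_q(R; \mathbb Z)) \Longrightarrow H_{p+q}(F; \mathbb Z)$ of the same type as \eqref{hsss}, and which takes the form
\[
H_2(F) \to H_2(G) \to H_0(G; R^{\mathrm{ab}}) \to H_1(F) \to H_1(G) \to 0,
\]
where $R^{\mathrm{ab}} = R/[R,R]$ carries the $G$-module structure descended from the conjugation action of $F$.

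Second, I would exploit the hypothesis that $F$ is free. A free group admits a one-dimensional classifying space (a wedge of circles), so $H_n(F; \mathbb Z) = 0$ for $n \geq 2$; in particular $H_2(F) = 0$. Combined with $H_1(F) \cong F/[F,F]$, the five-term sequence collapses to
\[
0 \to H_2(G) \to H_0(G; R^{\mathrm{ab}}) \to F/[F,F] \to G/[G,G] \to 0,
\]
exhibiting $H_2(G; \mathbb Z)$ as the kernel of the map $H_0(G; R^{\mathrm{ab}}) \to F/[F,F]$ induced by the inclusion $R \hookrightarrow F$ followed by abelianization.

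Third, I would identify the edge term. Since the $G$-action on $R^{\mathrm{ab}}$ is induced by conjugation in $F$, forming coinvariants amounts to killing all elements of the form $f r f^{-1} r^{-1} = [f, r]$ for $f \in F$, $r \in R$. A direct computation then gives $H_0(G; R^{\mathrm{ab}}) \cong R/[F,R]$. An element $r \, [F,R]$ lies in the kernel of the map to $F/[F,F]$ precisely when $r \in [F,F]$, so the kernel is $\bigl(R \cap [F,F]\bigr)/[F,R]$. This yields Hopf's formula $H_2(G; \mathbb Z) \cong (R \cap [F,F])/[F,R]$ and hence the stated short exact sequence.

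The main obstacle is really just the bookkeeping in the third step: identifying the $E^2_{0,1}$ term of the spectral sequence with $R/[F,R]$ and verifying that the edge homomorphism to $H_1(F) \cong F/[F,F]$ is the one induced by inclusion. The vanishing input $H_2(F) = 0$ and the formal existence of the five-term sequence are classical, so the substantive work lies entirely in this coinvariant calculation.
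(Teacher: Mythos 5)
Your derivation is correct, and it is worth noting up front that the paper itself offers no proof of this statement: it is quoted as a classical theorem of Hopf, so there is no in-paper argument to compare against. Your route --- the five-term exact sequence obtained from the bottom of the Lyndon--Hochschild--Serre spectral sequence for $1 \to R \to F \to G \to 1$, combined with the vanishing $H_2(F;\mathbb Z)=0$ for $F$ free and the identification of the coinvariants $H_0(G;R^{\mathrm{ab}})$ with $R/[F,R]$ --- is the standard modern proof of Hopf's formula, and each of your three steps is sound. You were also right to silently repair two defects in the statement as printed: the kernel is called $K$ in the surrounding setup but $R$ in the theorem, and the middle term of the displayed sequence should be $R\cap[F,F]$ rather than $[F,F]$ (as written, the sequence would force $H_2(G;\mathbb Z)\cong [F,F]/[F,R]$, which is wrong in general --- e.g.\ it fails already for $G$ free, where $H_2$ vanishes but $[F,F]/[F,R]$ need not). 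Your reading $H_2(G;\mathbb Z)\cong (R\cap[F,F])/[F,R]$ is the correct one, and it is the form the paper actually uses afterwards, since the subsequent four-term exact sequence beginning $1 \to H_2(G,\mathbb Z) \to R/[F,R] \to F/[F,F] \to F/R[F,F] \to 1$ is exactly the kernel--cokernel repackaging of your collapsed five-term sequence. The only substantive check you flagged as remaining --- that the edge map $R/[F,R]\to F/[F,F]$ is induced by the inclusion --- is indeed the one point requiring care, and it follows from naturality of the five-term sequence applied to the map of extensions from $1\to R\to F\to G\to 1$ to $1\to F\to F\to 1\to 1$.
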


This gives an exact sequence
$$
1 \to H_2(G,\mathbb Z) \to \bigfrac{R}{[F,R]} \to \bigfrac{F}{[F,F]} \to \bigfrac{F}{R[F,F]} \to 1. $$
The last two terms are finitely generated abelian groups and
algorithms exist to give their structure. Also in \cite{roberts:10}, we
explain how to use this exact sequence to find an upper bound
on the dimension of $H_2(G;k)$, where $k$ is the finite field of
prime characteristic $p$.

The inclusion homomorphism $i:K \to F$ induces a homomorphism
$i_*: A \to B$ where we have denoted $K/K^p[F,K]$ by $A$ and
$F/K^p[F,F]$ by $B$. Note that for $k \in K$ and $f \in F$ we have
that $[k,f]=k^fk^{-1}=1$ in $A$. Thus $k^f=k$ in $A$ which gives
that $A$ is a trivial $F$-module. Let $S_K$ be the set of
generators of $K$ as an $F$-module.

We note that the image of $i_*$ is generated by the set of all $i_*(k)$ for $k \in S_K$. Then since $B$ is a vector space over $k$,
there is a subset $S_K' \subset S_K$ such that $i_*(k')$ with $k \in S_K'$ is a basis for the image of $i_*$.

The primary interest is on the kernel of $i_*$, which is isomorphic to $H_2(G;k)$. As stated above, a previous paper gives an upper bound $n$
on the dimension of this vector space. We seek an explicit description of these $n$ elements of $S_K$. To this end, we restate two facts:

\begin{itemize}
\item $A$ is a vector space that is spanned by $S_K$
\item $S_K' \subset S_K$ is a subset with $i_*(S_K')$ a basis for the image of $i_*$ in $B$
\end{itemize}

Let $v \in A$, then $v= \displaystyle \sum_{\lambda \in S_K} c_\lambda \lambda$, where $c_\lambda \in k$, by $(1)$. Note that
$i_*(v)=0$ is equivalent to $\displaystyle \sum_{\lambda \in S_K} c_\lambda i_*(\lambda)=0$ in $B$.

Moreover, each $i_*(\lambda)= \displaystyle \sum_{\mu \in S_K'} a_{\lambda, \mu} i_*(\mu)$, where $a_{\lambda, \mu} \in k$, by $(2)$. Therefore,
$i_*(v)=0$ in $B$ if and only if
$$
\displaystyle \sum_{\mu \in S_K'} \left( \displaystyle \sum_{\lambda \in S_K} c_\lambda a_{\lambda, \mu} \right) i_*(\mu)=0 \text{ in }B,
$$
which is true if and only if
$$
\displaystyle \sum_{\lambda \in S_K} c_\lambda a_{\lambda, \mu}=0
$$
for all $\mu \in S_K'$. So we need to solve for the $c_\lambda$'s and find a basis for the solutions.

If $a \in A$ then $a=k_1^{f_1}k_2^{f_2} \cdots = k_1k_2 \cdots = k_1+k_2 + \cdots$ in $\mathbb F_p$. We want to use linear algebra in $B$ to find a basis
for the image of $i_*$ $\left\{ i_*(k) : k \in K_0 \right\}$.

$$
\xymatrix{& \bigfrac{F}{K^p[F,F]} \ar[dr] \\
\bigfrac{K}{K^p[F,K]} \ar[ur]^{i_*} \ar[rr]^{j_*} & &\bigfrac{F}{K^p[F,K]}}
$$ \\

\subsection{At the prime 7}
We consider the group $G=SL_2(\Z[1/7, \zeta_7])$, where $\zeta_p$ is a primitive $p$th root of unity. In
\cite{anton:08} it is proven that this group is generated by
$$
S=\{z,u_1,u_2,u_3,a,b,b_0,b_1,b_2,b_3,b_4,b_5,b_6,w\}
$$
modulo the relations
$$
\begin{array}{lcl}
R & = & \{b_t^{-1}z^{3t}bz^{3t}a,w^{-1}z^4u_1u_2u_3,z^7, [z,u_i], [u_i,u_j],a^4, [a^2,z], [a^2,u_i],\\
&& a^{-1}zaz, a^{-1}u_iau_i,\left[ b_s, b_t \right], b^{-3}a^2, b^{-3}b_0b_1b_2b_3b_4b_5b_6,b^{-7}_tw^{-1}b_t^{-1}w,\\
&& (b_0b_1^{-1}a^{-1}u_1)^3,(b_0b_2^{-1}a^{-1}u_2)^3,(b_0b_3^{-1}a^{-1}u_3)^3,\\
&& (b_0b_1^{-1}b_2^{-1}b_3a^{-1}u_1u_2)^3,  (b_0b_1^{-1}b_3^{-1}b_4a^{-1}u_1u_3)^3,(b_0b_2^{-1}b_3^{-1}b_5a^{-1}u_2u_3)^3,\\
&& (b_0b_1^{-1}b_2^{-1}b_3b_4b_5b_6^{-1}a^{-1}u_1u_2u_3)^3, a^{-2}b^{-1}u_ibz^{-3i}b^{-1}b_0^{-1}z^{3i}bz^{-i}u_i\}
\end{array}
$$
where $i,j \in \{1,2,3\}$ and $s,t \in \{1,2,3,4,5,6\}$.

That is, there is a short exact sequence $1 \to N(R) \to F(S) \to G \to 1$  with the set $S$ generating the free group
 $F(S)$ and the set $R$ normally generating the subgroup $N(R) \subset F(S)$.
 
We begin by reducing the number of generators and relators in $F(S)/N(R)$ in order to simplify the final calculations. Via GAP, 
it is easy to verify the following.
 
\begin{proposition}
There is an isomorphism of finitely-presented groups that maps the generators of the free group $F(S)$ to the
free group generated by $S'=\{ z,u_1,u_2,u_3,a,b_1 \}$ in the following way:
$$
\begin{array}{lcl}
 z & \mapsto & z\\
 u_1 & \mapsto & u_1 \\
 u_2 & \mapsto & u_2\\
 u_3 & \mapsto & u_3\\
 a & \mapsto & a\\
 b & \mapsto & z^{-3} b_1 z^3 a^{-1} \\
 b_0 & \mapsto & z^{-3} b_1 z^3\\
 b_1 & \mapsto & b_1\\
 b_2 & \mapsto & z^{3} b_1 z^{-3}\\
 b_3 & \mapsto & z^{-1} b_1 z\\
 b_4 & \mapsto & z^{2} b_1 z^{-2} \\
b_5 & \mapsto & z^{-2} b_1 z^2 \\
b_6 & \mapsto & z b_1 z^{-1}\\
w & \mapsto & z^{-2} u_1 z^{-1} u_2 u_3 .\\
\end{array}
$$

Moreover, the isomorphic finitely-presented group has set of 32 relations\\

\noindent $
R'=\{  zu_3z^{-1}u_3^{-1}, u_2u_3u_2^{-1}u_3^{-1},\\
 u_1u_2u_1^{-1}u_2^{-1}, \\
u_3au_3a^{-1}, \\
 u_1au_1a^{-1},\\
zu_2z^{-1}u_2^{-1},\\
 a^4, \\
u_1u_3u_1^{-1}u_3^{-1}, \\
 zaza^{-1}, \\
zu_1z^{-1}u_1^{-1}, \\
u_2au_2a^{-1}, \\
z^7,\\
b_1z^{-1}b_1zb_1^{-1}z^{-1}b_1^{-1}z, \\
b_1z^{-2}b_1z^2b_1^{-1}z^{-2}b_1^{-1}z^2,\\
z^{-3}b_1z^{-1}a^{-1}b_1z^{-1}a^{-1}b_1z^3a,\\
b_1z^{-3}b_1z^{-2}b_1^{-2}z^{-1}a^{-1}u_3a^{-1}z^{-1}b_1^{-1}u_3,\\
b_1z^{-1}b_1^{-2}z^{-1}b_1a^{-1}z^3u_2a^{-1}z^{-2}b_1^{-1}u_2,\\
b_1z^{-1}b_1z^{-3}b_1^{-2}z^{-1}u_1^{-1}a^{-2}z^{-2}b_1^{-1}u_1,\\  
b_1z^{-3}b_1z^3b_1^{-1}z^{-3}b_1^{-1}z^3, \\
z^{-1}b_1^-7zu_2^{-1}zu_3^{-1}u_1^{-1}zb_1^{-1}z^{-1}u_1z^{-1}u_2u_3, \\
b_1^-7u_2^{-1}zu_3^{-1}u_1^{-1}z^2b_1^{-1}z^{-3}u_1u_2u_3, \\
z^{-3}b_1^-7z^{-1}u_2^{-1}z^{-2}u_3^{-1}u_1^{-1}z^{-1}b_1^{-1}u_1u_2u_3, \\
\text{ \hspace{.5cm} }	zb_1^-7u_2^{-1}u_3^{-1}u_1^{-1}z^3b_1^{-1}z^{-3}u_1z^{-1}u_2u_3,\\
z^3b_1^-7u_2^{-1}z^{-2}u_3^{-1}u_1^{-1}z^{-2}b_1^{-1}zu_1u_2u_3,  \\
z^2b_1z^{-3}b_1z^{-3}b_1zb_1zb_1z^3b_1z^{-1}b_1a^{-2},\\
z^{-3}b_1z^{-1}b_1^{-1}u_2^{-1}a^{-1}b_1z^{-1}b_1^{-1}z^{-3}u_2^{-1}\\
\text{ \hspace{.5cm} }a^{-1}z^{-3}b_1z^{-1}b_1^{-1}z^{-3}u_2^{-1}a^{-1},\\
z^{-1}b_1^{-1}z^{-2}b_1z^3u_3^{-1}a^{-1}z^{-1}b_1^{-1}z^{-2}b_1z^3\\
\text{ \hspace{.5cm} }	u_3^{-1}a^{-1}z^{-1}b_1^{-1}z^{-2}b_1z^3u_3^{-1}a^{-1},\\
b_1^{-1}z^{-3}b_1za^{-1}z^{-2}u_1b_1^{-1}z^{-3}b_1z^3u_1^{-1}a^{-1}\\
\text{ \hspace{.5cm} }	b_1^{-1}z^{-3}b_1z^3u_1^{-1}a^{-1}, \\
b_1^{-1}z^{-1}b_1z^{-2}b_1z^{-1}b_1^{-1}a^{-1}z^3u_1u_2z^3b_1^{-1}\\
\text{ \hspace{.5cm} }	zb_1z^2b_1zb_1^{-1}u_1^{-1}a^{-1}u_2z^3b_1^{-1}zb_1z^2b_1zb_1^{-1}\\
\text{ \hspace{.5cm} }	u_1^{-1}a^{-1}u_2,\\  
b_1^{-1}z^{-1}b_1^{-1}z^{-2}b_1z^{-2}b_1z^{-1}u_1^{-1}z^{-1}a^{-1}u_3\\
\text{ \hspace{.5cm} }	z^{-3}b_1z^2b_1^{-1}zb_1^{-1}z^2b_1u_1^{-1}z^{-2}a^{-1}u_3z^{-1}b_1^{-1}z^{-2}\\
\text{ \hspace{.5cm} }	b_1z^3b_1^{-1}z^2b_1u_1^{-1}z^{-2}a^{-1}u_3,\\  
b_1^{-1}z^3b_1^{-1}z^{-1}b_1z^{-1}b_1za^{-1}\\
\text{ \hspace{.5cm} }z^{-2}u_3u_2z^{-3}b_1z^{-1}b_1^{-1}z^2b_1zb_1^{-1}z\\
\text{ \hspace{.5cm} }	u_3^{-1}a^{-1}u_2z^{-3}b_1z^{-1}b_1^{-1}z^2b_1zb_1^{-1}zu_3^{-1}a^{-1}u_2z^3, \\
zb_1z^{-3}b_1^{-1}z^{-3}b_1zu_1^{-1}za^{-1}u_2u_3z^{-3}b_1z^{-1}b_1^{-1}\\
\text{ \hspace{.5cm} }	z^{-1}b_1z^3b_1z^2b_1^{-1}zb_1^{-1}z^{-1}u_1^{-1}a^{-1}u_2u_3z^{-3}b_1z^{-1}\\
\text{ \hspace{.5cm} }	b_1^{-1}z^{-1}b_1z^3b_1z^2b_1^{-1}zb_1^{-1}z^{-1}u_1^{-1}a^{-1}u_2\\
\text{ \hspace{.5cm} }	u_3b_1^{-1}z^2b_1zb_1^{-1} \}.
$\\
\end{proposition}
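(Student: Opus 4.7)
The plan is to establish the isomorphism via a sequence of Tietze transformations applied to the presentation $\langle S \mid R \rangle$, successively eliminating the redundant generators $w, b, b_0, b_2, b_3, b_4, b_5, b_6$ in favor of $S' = \{z, u_1, u_2, u_3, a, b_1\}$. Each elimination is driven by a specific relation in $R$ that solves for the eliminated generator as a word in the remaining ones; the images listed in the statement are exactly the resulting normal forms.

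First I would use the relation $w^{-1} z^4 u_1 u_2 u_3$ together with $[z, u_i] = 1$ and $z^7 = 1$ to write $w = z^{-2} u_1 z^{-1} u_2 u_3$, matching the stated image. Next, the relation $a^{-1} z a z$ gives $a^{-1} z a = z^{-1}$, and the seven relations $b_t^{-1} z^{3t} b z^{3t} a$ (for $t = 0, 1, \ldots, 6$) express each $b_t$ as a word in $b, z, a$. Inverting the $t = 1$ case yields $b = z^{-3} b_1 z^3 a^{-1}$, and substituting back gives $b_t = z^{3t-3} b_1 z^{3-3t}$, which modulo $z^7 = 1$ reproduces the remaining images in the statement. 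The relation $b^{-3} b_0 b_1 \cdots b_6$ then becomes a consequence of the others and is discarded.

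Substituting these expressions into the remaining relators in $R$ and freely reducing, using $z^7 = 1$, the commutation relations among $z$ and the $u_i$, and $a^{-1} z a = z^{-1}$, should yield exactly the 32 words in $R'$. The reverse Tietze direction requires little work since $S' \subset S$ and each relation in $R'$ is obtained by literal substitution from a relation in $R$; by standard Tietze theory, the two presentations then define isomorphic groups.

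The main obstacle is purely computational bookkeeping: substituting $b = z^{-3} b_1 z^3 a^{-1}$ and the seven $b_t$'s into the cube relations $(b_0 b_1^{-1} a^{-1} u_1)^3, \ldots, (b_0 b_1^{-1} b_2^{-1} b_3 b_4 b_5 b_6^{-1} a^{-1} u_1 u_2 u_3)^3$, as well as into the final conjugation relation $a^{-2} b^{-1} u_i b z^{-3i} b^{-1} b_0^{-1} z^{3i} b z^{-i} u_i$ for $i \in \{1,2,3\}$, produces the long words in $S'$ at the end of $R'$. This is precisely the sort of Tietze reduction that GAP's presentation-simplification routines execute mechanically, so the verification is delegated to GAP and each claimed relator in $R'$ can be checked to freely reduce correctly from its preimage in $R$.
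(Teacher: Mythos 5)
Your proposal is correct and takes essentially the same approach as the paper: the paper's entire proof is ``Via GAP, it is easy to verify the following,'' i.e.\ it delegates the work to GAP's Tietze-transformation routines, and your explicit eliminations of $w, b, b_0, b_2,\ldots,b_6$ via the relators $w^{-1}z^4u_1u_2u_3$ and $b_t^{-1}z^{3t}bz^{3t}a$ (using $a^{-1}za=z^{-1}$ and $z^7=1$) reproduce exactly the generator images stated. The one slight inaccuracy is your claim that $b^{-3}b_0b_1\cdots b_6$ is discarded as redundant --- after substitution it survives, combined with $b^{-3}a^2$, as the relator $z^2b_1z^{-3}b_1z^{-3}b_1zb_1zb_1z^3b_1z^{-1}b_1a^{-2}$ appearing in $R'$ --- but this does not affect the argument since the final relator list is in any case verified by GAP.
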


By \cite{roberts:10}, the dimension of $H_2(G;\F_7)$ as a vector space over $\F_7$ is at most 6. We now seek generators of of this vector space. For simplicity, we denote $F(S')$ by $F$ and $N(R')$ by $N$. 
An application of the {{\sc{FindBasis}} algorithm from the same paper gives that 
$\bigfrac{N}{N^7[F,N]}$ is generated by the 12 elements
\begin{verbatim}
[ f1*f5*f1*f5^-1, 
f2*f3*f2^-1*f3^-1, 
f2*f5*f2*f5^-1,
f7*f5^-1*f7*f5^-1*f7*f5, 
f3*f7*f1^-2*f7*f1*f7^-2*f5^-1*f3*f1^-1*f5^-1*f7^-1,
f4*f7*f1^2*f7^-2*f1^2*f7*f5^-1*f1^-2*f4*f1^-1*f5^-1*f7^-1,
f2*f7*f1*f5^-1*f1^-2*f5*f7^-2*f1^3*f7*f5^-1*f2*f1^-1*f5^-1*f7^-1,
f1^2*f7^-7*f1^-1*f3^-1*f1^-1*f4^-1*f2^-1*f1^-2*f7^-1*f1^2*f2*f3*f4,
f7*f1*f7*f1^2*f7*f1*f7*f1^2*f7*f1^3*f7*f1^3*f7*f1*f5^-1*f1^-1*f5^-1,
f7*f1^2*f7^-1*f1^-1*f4^-1*f1^-1*f5^-1*f7*f1^2*f7^-1*f4^-1*f1^-2*
        f5^-1*f7*f1^2*f7^-1*f4^-1*f1^-2*f5^-1,
f1^-1*f7^-1*f1*f7*f1*f7*f1*f7^-1*f4^-1*f5^-1*f3*f1*f7^-1*f1*f7*f1^2*
        f7^-1*f1^-1*f7*f1^-1*f4^-1*f5^-1*f3*f1^-1*f7^-1*f1*f7*f1^2*f7^-1*
        f1^-1*f7*f1^-1*f4^-1*f5^-1*f3, 
f7*f1^-2*f7*f1*f7^-1*f1^2*f7*f1^2*f7^-1*f1*f7^-1*f1*f5^-1*f1^-2*f2*f3*
        f4*f7*f1^-2*f7*f1*f7^-1*f1^2*f7*f1^2*f7^-1*f1*f7^-1*f5^-1*f1^-3
        *f2*f3*f4*f7*f1^-2*f7*f1*f7^-1*f1^2*f7*f1^2*f7^-1*f1*
        f7^-1*f5^-1*f1^-3*f2*f3*f4 ]
\end{verbatim}

By reducing these elements in $\bigfrac{F}{[F,F]N^7}$ we obtain
\begin{verbatim}
[ <identity ...>, <identity ...>, <identity ...>, 
<identity ...>, <identity ...>, <identity ...>, 
f1^3, f7^-1, f2^-1, f1*f2, f1, f1^-1 ]
\end{verbatim}
Thus the last six elements form a basis for the 6-dimensional vector space over $\F_7$, 
$F/[F,F]N^7$. This implies that
the 6 vanishing elements are in the kernel of 
$$\bigfrac{N}{N^7[F,N]} \to \bigfrac{F}{N^7[F,F]}$$ 
and therefore are generators of $H_2$.  That is, the following is an explicit list of generators 
of $H_2\left( SL_2(\Z[1/7, \zeta_7]; \F_7 \right)$.

$$
\begin{array}{l}
\{ z a z a^{-1}, \\
u_1 u_2 u_1^{-1} u_2^{-1},\\
u_1 a u_1 a^{-1},\\
 b_1 a^{-1} b_1 a^{-1} b_1 a,\\
  u_2 b_1 z^{-2} b_1 z b_1^{-2} a^{-1} u_2 z^{-1} a^{-1} b_1^{-1}, \\
  u_3 b_1 z^2 b_1^{-2} z^2 b_1 a^{-1} z^{-2} u_3 z^{-1} a^{-1} b_1^{-1} \}
\end{array}
$$
However, the possibility still exists that any, or all, of these may be trivial in $H_2$.

\section{Homology Calculations}\label{calc}

The following tables give the results of the algorithms in \cite{roberts:10} applied to various linear groups.
For the second table, a ``less than'' symbols indicates that the rewriting system involved in the calculation was not
 confluent, so only an upper bound was found. Otherwise, the rewriting system was confluent and the exact dimension 
was found; the code to implement these groups in GAP is given below. We note that while none of the 
results in the table below are new, the previous results were found by a wide variety of methods, many of which are not 
computational in nature.  

\begin{table}[h!]
\centering
$$
\begin{array}{|l|c|c|c|c|}
\hline
                    & H_1(-;\F_2) &  H_1(-;\F_3) & H_1(-;\F_5) & H_1(-;\F_7) \\\hline
GL_2(\Z)            &       2     &  0            &  0           & 0\\\hline
SL_2(\Z)            &     1       &   1           &  0           & 0\\\hline
SL_2(\Z_2)          &     1       &   0           &  0           & 0\\\hline
SL_2(\Z_3)          &    0        &  1            &  0           & 0\\\hline
SL_2(\Z_5)          &    0        &   0           &  1           &0 \\\hline
SL_2(\Z[i])         &      1      &     0         &   0          &0 \\\hline
SL_2(\Z[\omega]), \omega^3=-1   &    0        &   1           &   0          & 0\\\hline
SL_2(\Z[\sqrt{-5}]) &    3        &   2           & 1            &1 \\\hline
PSL_2(\Z)           &      1      & 1             & 0            & 0\\\hline
\end{array}
$$
\caption{Dimensions of First Homology Groups}
\end{table}
\begin{table}[h!]
\centering
$$
\begin{array}{|l|c|c|c|c|}
\hline
                   & H_2(-;\F_2) & H_2(-;\F_3) & H_2(-;\F_5) & H_2(-;\F_7) \\\hline
GL_2(\Z)           &    \le 4        & \le 2            &  \le 2         &   \le 2           \\\hline
SL_2(\Z)           &     \le 2       &   \le 2          &  \le 1         &   \le 1           \\\hline
SL_2(\Z_2)         &    1       &     0        &   0         &    0       \\\hline
SL_2(\Z_3)         &    0        &    1        &   0         &    0       \\\hline
SL_2(\Z_5)         &    0        &    0         &  1         &    0       \\\hline
SL_2(\Z[i])        &    1        &   0          &   0        &    0          \\\hline
SL_2(\Z[\omega]), \omega^3=-1   &     \le 1       &  \le 2           &    \le 1       &   \le 1          \\\hline
SL_2(\Z[\sqrt{-5}])&    \le 3        &    \le 3         &    0       &      0        \\\hline
PSL_2(\Z)          &    \le 1        &    \le 1         &    0       &      0        \\\hline
\end{array}
$$
\caption{Dimensions of Second Homology Groups}
\end{table}

\section{Conclusion}\label{conclusion}

The motivation for the algorithm used in the paper grew from work on Quillen's
conjecture. The utility of these algorithms is more general. In theory, they can 
be used to calculate or estimate the first and second homology of any 
finitely-presented group, provided homology coefficients are in a finite field.

Future work will involve refining and using the algorithms on a larger collection of sets with the goal of constructing the aforementioned database of calculations. In the context 
of the original problem, however, work to calculate the image of the transgression
$\tau$ in Figure \ref{ss} is necessary to make progress on the conjecture.

\bibliographystyle{abbrv}
\bibliography{ref}

\end{document}